\newtheorem{theorem}{Theorem}[section]
\theoremstyle{plain}
\newtheorem{definition}[theorem]{Definition}
\newtheorem{lemma}[theorem]{Lemma}
\numberwithin{equation}{section}
\begin{document}
\title[Some bounds for the pseudocharacter of the space $C_{\alpha }\left(
X,Y\right) $]{Some bounds for the pseudocharacter of the space $C_{\alpha
}\left( X,Y\right) $}
\author{\c{C}etin Vural}
\address{Gazi Universitesi, Fen Fakultesi, Matematik Bolumu, 06500
Teknikokullar, Ankara, Turkey}
\email{cvural@gazi.edu.tr}
\date{}
\subjclass[2010]{ 54C35, 54A25, 54C05}
\keywords{Pseudocharacter, function space}
\thanks{The author acknowledge Mr. Hasan G\"{u}l for the first draft of
manuscript.}

\begin{abstract}
Let $C_{\alpha }\left( X,Y\right) $ be the set of all continuous functions
from $X$ into $Y$ endowed with the set-open topology where $\alpha $ is a
hereditarily closed, compact network on $X.$ We obtain that:\bigskip

$i$-) $\psi \left( f,C_{\alpha }\left( X,Y\right) \right) \leq w\alpha
c\left( X\right) \cdot \underset{A\in \alpha }{\sup }\left( \psi \left(
f\left( A\right) ,Y\right) \right) \cdot \underset{A\in \alpha }{\sup }%
\left( w\left( f\left( A\right) \right) \right) \medskip $

$ii$-) $\psi \left( f,C_{\alpha }\left( X,Y\right) \right) \leq w\alpha
c\left( X\right) \cdot psw_{e}(f(X),Y).$
\end{abstract}

\maketitle

\section{Introduction and Terminology}

Let $X$ and $Y$ be topological spaces, and let $C\left( X,Y\right) $ denote
the set of all continuous mappings from $X$ into $Y.$ Let $\alpha $ be a
collection of subsets of $X.$ The topology having subbase $\left\{ \left[ A,V%
\right] :A\in \alpha \text{ and }V\text{ is an open subset of }Y\right\} $
on the set $C\left( X,Y\right) $ is denoted by $C_{\alpha }\left( X,Y\right) 
$ where \newline
$\left[ A,V\right] =\left\{ f\in C\left( X,Y\right) :f(A)\subseteq V\right\}
.$ If $\alpha $ consists of all finite subsets of $X,$ then the set $C\left(
X,Y\right) $ endowed with that topology is called pointwise convergence
topology and denoted by $C_{p}\left( X,Y\right) $.

The cardinality and the closure of a set is denoted by $\left\vert
A\right\vert $ and $cl\left( A\right) $, respectively. The restriction of a
mapping $f:X\rightarrow Y$ to a subset $A$ of $X$ is denoted by $f_{\mid
_{A}}.$ $T\left( X\right) $ denotes the set of all non-empty open subsets of
a topological space $X.$ $ord\left( x,\mathcal{A}\right) $ is the
cardinality of the collection $\left\{ A\in \mathcal{A}:x\in A\right\} $.
Throughout this paper $X$ and $Y$ are regular topological spaces, and $%
\alpha $ is a hereditarily closed, compact network on the domain space $X.$
(i.e., $\alpha $ is a network on $X$ such that each member of it is compact
and each closed subset of a member of it is a member of $\alpha .$) Without
loss of generality, we may assume that $\alpha $ is closed under finite
unions. Recall that \textit{the weak }$\alpha $-\textit{covering number }of $%
X$ is defined to be $w\alpha c\left( X\right) =\min \left\{ \left\vert \beta
\right\vert :\beta \subseteq \alpha \text{ and }\bigcup \beta \text{ is
dense in }X\right\} .$ The \textit{weight, density} and \textit{character}
of a space $X$ are denoted by $w(X),$ $d(X)$ and $\chi (X)$, respectively.
The \textit{i-weight} of a topological space $X,$ is the least of cardinals $%
w(Y)$ of the Tychonoff spaces $Y$ which are continuous one-to-one images of $%
X.$ \textit{The pseudocharacter} \textit{of a space} $X$ \textit{at a subset}
$A$, denoted by $\psi (A,X)$, is defined as the smallest cardinal number of
the form $\left\vert \mathcal{U}\right\vert ,$ where $\mathcal{U}$ is a
family of open subsets of $X$ such that $\bigcap \mathcal{U}=A$. If $%
A=\left\{ x\right\} $ is a singleton, then we write $\psi \left( x,X\right) $
instead of $\psi \left( \left\{ x\right\} ,X\right) .$ \textit{The
pseudocharacter} \textit{of a space} $X$ is defined to be $\psi \left(
X\right) =\sup \left\{ \psi \left( x,X\right) :x\in X\right\} $. \textit{The
diagonal number} $\Delta \left( X\right) $ of a space $X$ is the
pseudocharacter of its square $X\times X$ at its diagonal $\Delta
_{X}=\left\{ \left( x,x\right) :x\in X\right\} .$

The pseudocharacter of the space $C\left( X,Y\right) $ has been studied, and
some remarkable equalities or inequalities was obtained between the
pseudocharacter of the space $C\left( X,Y\right) $ for certain topologies
and some cardinal functions on the spaces $X$ and $Y.$ For instance, in \cite%
{Guthrie}, the inequalities $\psi \left( Y\right) \leq \psi \left(
C_{p}\left( X,Y\right) \right) \leq \psi \left( Y\right) \cdot d(X)$ and, in 
\cite{Arkhangelskii} and \cite{Arkhangelskii-2}, the equalities $\psi \left(
C_{p}\left( X,%
\mathbb{R}
\right) \right) =d(X)=iw\left( C_{p}\left( X,%
\mathbb{R}
\right) \right) $, and in \cite{McCoy-Ntantu}, $\psi \left( C_{\alpha
}\left( X,%
\mathbb{R}
\right) \right) =\Delta (C_{\alpha }\left( X,%
\mathbb{R}
\right) )=w\alpha c\left( X\right) $ were obtained. In this paper, when the
range space $Y$ is an arbitrary topological space instead of the space $%
\mathbb{R}
,$ we obtained some inequalities between the pseudocharacter of the space $%
C_{\alpha }\left( X,Y\right) $ at a point $f$ and the weak $\alpha $%
-covering number of the domain space $X$ and some cardinal functions on the
range space $Y.$

We assume that all cardinal invariants are at least the first infinite
cardinal $\aleph _{0}.$

Notations and terminology not explained above can be found in \cite{Kunen}
and \cite{Vaughan}.

\section{Main Results}

First, we give an inequality between the pseudocharacter of a point $f$ in
the space $C_{\alpha }\left( X,Y\right) $ and some cardinal functions on
spaces $X$ and $Y$.

\begin{theorem}
For each $f\in C_{\alpha }\left( X,Y\right) ,$ we have%
\begin{equation*}
\psi \left( f,C_{\alpha }\left( X,Y\right) \right) \leq w\alpha c\left(
X\right) \cdot \underset{A\in \alpha }{\sup }\left( \psi \left( f\left(
A\right) ,Y\right) \right) \cdot \underset{A\in \alpha }{\sup }\left(
w\left( f\left( A\right) \right) \right)
\end{equation*}%
\smallskip
\end{theorem}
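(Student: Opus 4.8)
The plan is to build, for a given $f\in C_{\alpha}(X,Y)$, a family $\mathcal{U}$ of open subsets of $C_{\alpha}(X,Y)$ each containing $f$, with $\bigcap\mathcal{U}=\{f\}$ and $|\mathcal{U}|\le\kappa\cdot\lambda\cdot\mu$, where $\kappa=w\alpha c(X)$, $\lambda=\sup_{A\in\alpha}\psi(f(A),Y)$ and $\mu=\sup_{A\in\alpha}w(f(A))$. First I would pick $\beta\subseteq\alpha$ with $|\beta|\le\kappa$ and $\bigcup\beta$ dense in $X$, and observe that it is enough to attach to each $A\in\beta$ a family $\mathcal{W}_{A}$ of open sets in $C_{\alpha}(X,Y)$, all containing $f$, with $|\mathcal{W}_{A}|\le\lambda\cdot\mu$, such that any $g\in\bigcap\mathcal{W}_{A}$ satisfies $g_{\mid_{A}}=f_{\mid_{A}}$: then $\mathcal{U}=\bigcup_{A\in\beta}\mathcal{W}_{A}$ works, because a $g$ in $\bigcap\mathcal{U}$ agrees with $f$ on the dense set $\bigcup\beta$, hence (as $Y$ is Hausdorff) everywhere.

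Now fix $A\in\beta$ and set $K=f(A)$; it is compact and Hausdorff, with $w(K)\le\mu$ and $\psi(K,Y)\le\lambda$. Take open sets $\{V_{i}:i\in I\}$ of $Y$ with $\bigcap_{i}V_{i}=K$ and $|I|\le\lambda$; since $f(A)\subseteq V_{i}$ every $[A,V_{i}]$ contains $f$, and a $g$ lying in all of them has $g(A)\subseteq K$. Take a base $\{B_{j}:j\in J\}$ of $K$ with $|J|\le\mu$ and, for each $j$, an open $\widehat{B}_{j}\subseteq Y$ with $\widehat{B}_{j}\cap K=B_{j}$. For $k\in J$ let $C_{k}=cl_{A}\bigl(A\cap f^{-1}(\widehat{B}_{k})\bigr)$; this is closed in the compact space $A$, so it is compact and, by hereditary closedness of $\alpha$, a member of $\alpha$, and by continuity $f(C_{k})\subseteq cl_{K}(B_{k})$. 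Let $\mathcal{W}_{A}$ be the family consisting of all $[A,V_{i}]$ ($i\in I$) together with all $[C_{k},\widehat{B}_{j}]$ for those pairs $(k,j)\in J\times J$ with $cl_{K}(B_{k})\subseteq B_{j}$. For such a pair, $f(C_{k})\subseteq cl_{K}(B_{k})\subseteq B_{j}\subseteq\widehat{B}_{j}$, so $[C_{k},\widehat{B}_{j}]$ contains $f$; and $|\mathcal{W}_{A}|\le|I|+|J|^{2}\le\lambda\cdot\mu$.

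To see that $g\in\bigcap\mathcal{W}_{A}$ forces $g_{\mid_{A}}=f_{\mid_{A}}$, take $a\in A$ and suppose $g(a)\ne f(a)$. Both values lie in $K$ (the $[A,V_{i}]$'s give $g(a)\in K$), and $K$ is compact Hausdorff, hence regular; so there is a $K$-open set $U$ with $f(a)\in U$ and $g(a)\notin cl_{K}(U)$, then a base element $B_{j}$ with $f(a)\in B_{j}\subseteq U$, and finally a base element $B_{k}$ with $f(a)\in B_{k}$ and $cl_{K}(B_{k})\subseteq B_{j}$. Now $a\in A\cap f^{-1}(\widehat{B}_{k})\subseteq C_{k}$, the pair $(k,j)$ is one of those used, so $[C_{k},\widehat{B}_{j}]\in\mathcal{W}_{A}$ and $g(C_{k})\subseteq\widehat{B}_{j}$; therefore $g(a)\in\widehat{B}_{j}\cap K=B_{j}\subseteq cl_{K}(B_{j})$, contradicting $g(a)\notin cl_{K}(U)\supseteq cl_{K}(B_{j})$. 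Hence $g_{\mid_{A}}=f_{\mid_{A}}$, and then $|\mathcal{U}|\le|\beta|\cdot\sup_{A\in\beta}|\mathcal{W}_{A}|\le\kappa\cdot\lambda\cdot\mu$ gives the theorem.

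The step I expect to be the real obstacle is the incompatibility between the subbase of $C_{\alpha}(X,Y)$, whose first coordinates are forced to be the \emph{closed} compact members of $\alpha$, and the sets ``$\{a\in A:f(a)\in B_{j}\}$'' that one would like to control, which are only \emph{open} in $A$. Passing from $A\cap f^{-1}(\widehat{B}_{k})$ to its closure $C_{k}$ and using regularity of the compact set $K=f(A)$ — so that the enlargement produced by the closure is still absorbed by a marginally bigger base element $B_{j}$ — is the crux; the remaining ingredients are routine cardinal arithmetic and the fact that continuous maps into a Hausdorff space which agree on $\bigcup\beta$ agree on all of $X$.
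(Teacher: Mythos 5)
Your proposal is correct and follows essentially the same route as the paper's proof: pick a dense subfamily $\beta$, use the sets $\left[ A,V_{i}\right]$ coming from $\psi \left( f\left( A\right) ,Y\right)$ to force $g\left( A\right) \subseteq f\left( A\right)$, use $\mu^{2}$-many subbasic sets built from a base of the compact set $f\left( A\right)$ (with first coordinate a closed subset of $A$, hence in $\alpha$ by hereditary closedness) together with regularity to force $g_{\mid A}=f_{\mid A}$, and finish by density of $\bigcup \beta$ and Hausdorffness of $Y$. The only difference is cosmetic: the paper indexes over pairs $G,U$ from the base with disjoint closures and uses $\left[ A\cap f^{-1}\left( cl\left( G\right) \right) ,Y\backslash cl\left( U\right) \right]$, whereas you index over nested pairs $cl_{K}\left( B_{k}\right) \subseteq B_{j}$ and use $\left[ cl_{A}\left( A\cap f^{-1}\left( \widehat{B}_{k}\right) \right) ,\widehat{B}_{j}\right]$ — a dual implementation of the same separation idea.
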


\begin{proof}
Let $w\alpha c\left( X\right) \cdot \sup \left\{ \psi \left( f\left(
A\right) ,Y\right) :A\in \alpha \right\} \cdot \sup \left\{ w\left( f\left(
A\right) \right) :A\in \alpha \right\} =\kappa .$ The inequality $w\alpha
c\left( X\right) \leq \kappa $ gives us a subfamily $\beta =\left\{
A_{i}:i\in I\right\} $ of $\alpha $ such that $\left\vert I\right\vert \leq
\kappa $ and $X=cl\left( \bigcup \beta \right) =cl\left( \bigcup \left\{
A_{i}:i\in I\right\} \right) .$ Since $\psi \left( f\left( A_{i}\right)
,Y\right) \leq \kappa $ for each $i\in I,$ there exists a family $\mathcal{V}%
_{i}$ consisting of open subsets of the space $Y$ such that $\left\vert 
\mathcal{V}_{i}\right\vert \leq \kappa $ and $f\left( A_{i}\right) =\bigcap
\left\{ V:V\in \mathcal{V}_{i}\right\} $ for each $i\in I.$ Since $w\left(
f\left( A_{i}\right) \right) \leq \kappa $ for each $i\in I,$ the subspace
has a base $\mathcal{B}_{i}$ with $\left\vert \mathcal{B}_{i}\right\vert
\leq \kappa .$ For each $i\in I,$ let \newline
$\mathcal{H}_{i}=\left\{ \left[ A_{i}\cap f^{-1}\left( cl\left( G\right)
\right) ,Y\backslash cl\left( U\right) \right] :G,U\in \mathcal{B}_{i}\text{
and }cl\left( G\right) \cap cl\left( U\right) =\emptyset \right\} ,$\newline
$\mathcal{R}_{i}=\left\{ \left[ A_{i},V\right] :V\in \mathcal{V}_{i}\right\} 
$ and $\mathcal{W}=\left( \bigcup\nolimits_{i\in I}\mathcal{R}_{i}\right)
\cup \left( \bigcup\nolimits_{i\in I}\mathcal{H}_{i}\right) .$\newline
It is clear that $\left\vert \mathcal{W}\right\vert \leq \kappa $ and $f\in W
$ for each $W\in \mathcal{W}$. Now, we shall prove that $\bigcap \mathcal{W=}%
\left\{ f\right\} .$ Take a $g\in \bigcap \mathcal{W}.$ We claim that $%
g_{\mid A_{i}}=f_{\mid A_{i}}$ for each $i\in I.$ Assume the contrary.
Suppose $g_{\mid A_{j}}\neq f_{\mid A_{j}}$ for some $j\in I$ that is, we
have an $x\in A_{j}$ such that $f(x)\neq g(x).$ Since $g\in \bigcap \mathcal{%
W}$ and $f\left( A_{j}\right) =\bigcap \left\{ V:V\in \mathcal{V}%
_{j}\right\} ,$ we have $g(A_{j})\subseteq f\left( A_{j}\right) .$ Therefore 
$g(x)\in f\left( A_{j}\right) $ and $f(x)\in f\left( A_{j}\right) .$ Since $%
f(x)\neq g(x)$ and the space $Y$ is regular, there exist $G$ and $U$ in $%
\mathcal{B}_{j}$ such that $f(x)\in cl\left( G\right) ,$ $g(x)\in cl\left(
U\right) $ and $cl\left( G\right) \cap cl\left( U\right) =\emptyset .$ On
the other hand, since $\left[ A_{j}\cap f^{-1}\left( cl\left( G\right)
\right) ,Y\backslash cl\left( U\right) \right] \in \mathcal{H}_{j}$ and $%
g\in \bigcap \mathcal{W},$ we have $g\in \left[ A_{j}\cap f^{-1}\left(
cl\left( G\right) \right) ,Y\backslash cl\left( U\right) \right] .$ But this
contradicts to the fact that $g(x)\in cl\left( U\right) .$ Hence, $g_{\mid
A_{i}}=f_{\mid A_{i}}$ for each $i\in I,$ or in other words $g_{\mid
\bigcup\nolimits_{i\in I}A_{i}}=f_{\mid \bigcup\nolimits_{i\in I}A_{i}}.$
Hausdorffness of the space $Y$ and the equality $X=cl\left( \bigcup \beta
\right) =cl\left( \bigcup \left\{ A_{i}:i\in I\right\} \right) $ lead us to
the fact that $g=f.$ Therefore $\bigcap \mathcal{W=}\left\{ f\right\} $,
that is $\psi \left( f,C_{\alpha }\left( X,Y\right) \right) \leq \kappa .$
\end{proof}

Recall that a cover $\mathcal{A}$ of a set $X$ is called a \textit{%
separating cover} if \newline
$\bigcap \left\{ A\in \mathcal{A}:x\in A\right\} =\left\{ x\right\} ,$ for
each $x\in X.$ Also recall that \textit{the point separating weight} $%
psw\left( X\right) $ of a topological space $X$ is the smallest infinite
cardinal $\kappa $ such that the space $X$ has a separating open cover $%
\mathcal{V}$ with $ord(x,\mathcal{V})\leq \kappa $ for each $x\in X.$

\begin{definition}
Let $A$ be a subset of a topological space $\left( X,\tau \right) .$ We say
that the point separating exterior weight $psw_{e}\left( A,X\right) \leq
\kappa ,$ if there exists a subfamily $\mathcal{V}\subseteq \tau $
satisfying $ord(a,\mathcal{V})\leq \kappa $ and\textit{\ }$\bigcap \left\{
V\in \mathcal{V}:a\in V\right\} =\left\{ a\right\} ,$ for each $a\in A.$
\end{definition}

The following lemmas are needed for the second main theorem, and in order to
prove them, let us recall the Mi\v{s}\v{c}enko's lemma.

\begin{lemma}[\textbf{Mi\v{s}\v{c}enko's lemma \protect\cite{Vaughan}}]
Let $\kappa $ be an infinite cardinal, let $X$ be a set, and let $\mathcal{A}
$ be a collection of subsets of $X$ such that $ord\left( x,\mathcal{A}%
\right) \leq \kappa $ for all $x\in X.$ Then the number of finite minimal
covers of $X$ by elements of $\mathcal{A}$ is at most $\kappa .$
\end{lemma}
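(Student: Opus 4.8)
The plan is to prove the lemma by induction on $n$, in the strengthened form: \emph{for every set $X$ and every collection $\mathcal{A}$ of subsets of $X$ with $ord(x,\mathcal{A})\le\kappa$ for all $x\in X$, the number of minimal covers of $X$ consisting of at most $n$ members of $\mathcal{A}$ is at most $\kappa$} (where a minimal cover is a subfamily covering $X$ none of whose proper subfamilies does). Since every finite minimal cover has some finite cardinality, the set of all finite minimal covers is the increasing union over $n\in\omega$ of these, so the bound $\aleph_0\cdot\kappa=\kappa$ then yields the lemma. The case $X=\emptyset$ is trivial, so assume $X\neq\emptyset$ and fix a point $x_0\in X$.

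For $n=1$, a minimal cover of cardinality at most $1$ of a nonempty $X$ is $\{A\}$ with $A=X$, and every such $A$ contains $x_0$, so there are at most $ord(x_0,\mathcal{A})\le\kappa$ of them. For the inductive step assume the statement for $n-1$ (for \emph{all} pairs $(X,\mathcal{A})$ satisfying the hypothesis, which is essential), and consider a minimal cover $\mathcal{C}$ of $X$ of cardinality exactly $n\ge 2$. Some $A\in\mathcal{C}$ contains $x_0$, and there are at most $\kappa$ candidates for such an $A$ in $\mathcal{A}$, so it suffices to show that for each fixed $A\in\mathcal{A}$ there are at most $\kappa$ minimal covers of cardinality $n$ containing $A$. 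Fix such an $A$; since $|\mathcal{C}|=n\ge 2$, minimality forces $A\neq X$, so $X\setminus A\neq\emptyset$.

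The crucial step is to pass to the traces on $X\setminus A$: I claim $\mathcal{D}:=\{B\cap(X\setminus A):B\in\mathcal{C}\setminus\{A\}\}$ is a minimal cover of $X\setminus A$ of cardinality exactly $n-1$ by members of $\mathcal{A}':=\{B\cap(X\setminus A):B\in\mathcal{A}\}$, and $\mathcal{A}'$ still has order $\le\kappa$ at every point of $X\setminus A$. Indeed, a point witnessing that $B$ is not redundant in $\mathcal{C}$ lies in $B$ but in no other member of $\mathcal{C}$, in particular not in $A$, so it witnesses that $B\cap(X\setminus A)$ is not redundant in $\mathcal{D}$; the same points show the $n-1$ traces are pairwise distinct. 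Applying the inductive hypothesis to $(X\setminus A,\mathcal{A}')$ bounds the number of such trace families $\mathcal{D}$ by $\kappa$.

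It remains to bound the fibre of the assignment $\mathcal{C}\mapsto\mathcal{D}$: given a trace family $\mathcal{D}=\{D_1,\dots,D_{n-1}\}$ of nonempty sets, any $\mathcal{C}=\{A,B_1,\dots,B_{n-1}\}$ with $B_i\cap(X\setminus A)=D_i$ has, for each $i$, $B_i\in\mathcal{A}$ with $B_i\supseteq D_i$; choosing any $d_i\in D_i$, there are at most $ord(d_i,\mathcal{A})\le\kappa$ possible $B_i$, hence at most $\kappa^{\,n-1}=\kappa$ possible $\mathcal{C}$. Therefore the number of minimal covers of cardinality $n$ containing a fixed $A$ is at most $\kappa\cdot\kappa=\kappa$, the number of all minimal covers of cardinality $n$ is at most $\kappa\cdot\kappa=\kappa$, and together with the at most $\kappa$ covers of smaller cardinality this completes the induction. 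The main obstacle is precisely that the naive map ``delete $A$ and recurse on $X\setminus A$'' fails to be injective, since two distinct members of $\mathcal{A}$ can have the same trace on $X\setminus A$; the fibre count in this last paragraph is exactly what compensates for that loss of injectivity.
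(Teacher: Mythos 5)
Your proof is correct, but note that the paper itself does not prove this lemma at all: it is quoted as a known result and attributed to the Handbook of Set-Theoretic Topology \cite{Vaughan}, so there is no in-paper argument to compare against. Your induction on the size $n$ of the cover, with a fixed point $x_{0}$ used to bound the choices of the member $A\ni x_{0}$, is exactly the standard strategy; the step where you pass to the trace family $\left\{ B\cap \left( X\backslash A\right) :B\in \mathcal{C}\backslash \left\{ A\right\} \right\} $ and then repair the resulting loss of injectivity by the fibre count ($\leq \kappa ^{n-1}=\kappa $ preimages, using a chosen point $d_{i}\in D_{i}$ and $ord\left( d_{i},\mathcal{A}\right) \leq \kappa $) is sound, and you correctly flag that the inductive hypothesis must be quantified over all pairs $\left( X,\mathcal{A}\right) $ since the recursion changes both. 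The textbook route avoids your fibre argument altogether: one proves the statement for minimal covers of an arbitrary subset $S\subseteq X$ by elements of $\mathcal{A}$ itself (i.e.\ finite $\mathcal{B}\subseteq \mathcal{A}$ with $S\subseteq \bigcup \mathcal{B}$, no proper subfamily covering $S$); then for a minimal cover $\mathcal{C}$ of $S$ containing $A$, the family $\mathcal{C}\backslash \left\{ A\right\} $ is itself a minimal cover of $S\backslash A$ of size $n-1$, and $\mathcal{C}\mapsto \mathcal{C}\backslash \left\{ A\right\} $ is injective for fixed $A$, so no multiplicity needs to be controlled. Your version trades that small generalization of the statement for the extra $\kappa ^{n-1}$ counting step; both are valid, and your write-up correctly handles the only delicate points (distinctness and nonemptiness of the traces via the minimality witnesses).
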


\begin{lemma}
Let $Z$ be subspace of the space $X$ such that $psw_{e}\left( Z,X\right)
\leq \kappa .$ Then $\psi \left( K,X\right) \leq \kappa $ for each compact
subset $K$ of $Z.$
\end{lemma}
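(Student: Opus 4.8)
The plan is to construct explicitly a family $\mathcal{U}$ of open subsets of $X$ with $\left\vert \mathcal{U}\right\vert \leq \kappa $ and $\bigcap \mathcal{U}=K$. Fix a family $\mathcal{V}\subseteq \tau $ witnessing $psw_{e}\left( Z,X\right) \leq \kappa $, so that $ord\left( z,\mathcal{V}\right) \leq \kappa $ and $\bigcap \left\{ V\in \mathcal{V}:z\in V\right\} =\left\{ z\right\} $ for every $z\in Z$. We may assume $\mathcal{V}$ covers $K$, for if some $z\in K$ lay in no member of $\mathcal{V}$ then $X=\left\{ z\right\} $ and the statement is trivial. Let $\mathcal{M}$ be the collection of all finite subfamilies $\mathcal{F}\subseteq \mathcal{V}$ which cover $K$ and are minimal with this property, and set $\mathcal{U}=\left\{ \bigcup \mathcal{F}:\mathcal{F}\in \mathcal{M}\right\} $; every member of $\mathcal{U}$ is open in $X$ and contains $K$, so $\bigcap \mathcal{U}\supseteq K$.

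Next I would verify the reverse inclusion $\bigcap \mathcal{U}\subseteq K$. Fix $x\in X\setminus K$; for each $z\in K$ the equality $\bigcap \left\{ V\in \mathcal{V}:z\in V\right\} =\left\{ z\right\} $ supplies some $V_{z}\in \mathcal{V}$ with $z\in V_{z}$ and $x\notin V_{z}$. The family $\left\{ V_{z}:z\in K\right\} $ covers the compact set $K$, hence has a finite subcover, and any minimal subfamily $\mathcal{F}$ of that subcover which still covers $K$ belongs to $\mathcal{M}$ and satisfies $x\notin \bigcup \mathcal{F}$; therefore $x\notin \bigcap \mathcal{U}$. This gives $\bigcap \mathcal{U}=K$.

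The main point, and the step I expect to require the most care, is the cardinality bound $\left\vert \mathcal{M}\right\vert \leq \kappa $: here $psw_{e}$ controls $ord\left( \cdot ,\mathcal{V}\right) $ only at points of $Z$, while the members of $\mathcal{V}$ are not subsets of $K$, so Mi\v{s}\v{c}enko's lemma cannot be applied to $\mathcal{V}$ directly. I would pass to traces, putting $\mathcal{A}=\left\{ V\cap K:V\in \mathcal{V},\ V\cap K\neq \emptyset \right\} $. Since every point of $K$ lies in $Z$, we have $ord\left( z,\mathcal{A}\right) \leq ord\left( z,\mathcal{V}\right) \leq \kappa $ for each $z\in K$, so by Mi\v{s}\v{c}enko's lemma the set of finite minimal covers of $K$ by members of $\mathcal{A}$ has cardinality at most $\kappa $. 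If $\mathcal{F}\in \mathcal{M}$, then distinct members of $\mathcal{F}$ have distinct traces on $K$ (a point of $K$ witnessing the minimality of $\mathcal{F}$ at one member lies in no other member of $\mathcal{F}$), and one checks that $\left\{ V\cap K:V\in \mathcal{F}\right\} $ is a finite minimal cover of $K$ by members of $\mathcal{A}$; hence $\mathcal{F}\mapsto \left\{ V\cap K:V\in \mathcal{F}\right\} $ maps $\mathcal{M}$ into that set. Finally, a fixed minimal cover $\left\{ W_{1},\dots ,W_{m}\right\} $ of $K$ by members of $\mathcal{A}$ has at most $\kappa $ preimages under this map: a preimage is obtained by choosing, for each $j$, some $V_{j}\in \mathcal{V}$ with $V_{j}\cap K=W_{j}$, and all such $V_{j}$ contain a fixed point $z_{j}\in W_{j}\subseteq Z$, so there are at most $ord\left( z_{j},\mathcal{V}\right) \leq \kappa $ choices for each $j$, giving at most $\kappa ^{m}=\kappa $ preimages. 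Combining, $\left\vert \mathcal{M}\right\vert \leq \kappa \cdot \kappa =\kappa $, so $\left\vert \mathcal{U}\right\vert \leq \kappa $ and $\psi \left( K,X\right) \leq \kappa $. Note that regularity of $X$ is not used anywhere in this argument; only the separating property of $\mathcal{V}$ and the compactness of $K$ enter.
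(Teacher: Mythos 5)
Your proof is correct and follows essentially the same route as the paper: take the family of unions of minimal finite covers of $K$ by members of $\mathcal{V}$, bound its size via Mi\v{s}\v{c}enko's lemma, and use compactness plus the separating property to show the intersection is exactly $K$. Your extra step of passing to traces $V\cap K$ and counting preimages is a careful patch of a point the paper applies Mi\v{s}\v{c}enko's lemma to directly (the lemma as stated bounds the order at all points and counts covers of the whole set), so it is a welcome refinement rather than a different argument.
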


\begin{proof}
Let $\mathcal{V}$ be a family of open subsets of $X$ satisfying $ord(z,%
\mathcal{V})\leq \kappa $ and\textit{\ }$\bigcap \left\{ V\in \mathcal{V}%
:z\in V\right\} =\left\{ z\right\} ,$ for each $z\in Z.$ Let $K$ be any
compact subspace of $Z$ and let\newline
$\mu =\left\{ \mathcal{W}:\mathcal{W\subseteq V}\text{ and }\mathcal{W}\text{
is a minimal finite open cover for }K\right\} .$ By Mi\v{s}\v{c}enko's
lemma, we have $\left\vert \mu \right\vert \leq \kappa .$

Define the family $\ \mathcal{O}=\left\{ \bigcup\nolimits_{W\in \mathcal{W}%
}W:\mathcal{W\in \mu }\right\} .$ It is clear that $\left\vert \mathcal{O}%
\right\vert \leq \kappa $ and it can be easily seen that $\bigcap \mathcal{O}%
=K.$ Hence $\psi \left( K,X\right) \leq \kappa .$
\end{proof}

\begin{lemma}
Let $Z$ be subspace of the space $X$ such that $psw_{e}\left( Z,X\right)
\leq \kappa .$ Then we have $w\left( K\right) \leq \kappa $ for each compact
subset $K$ of $Z.$
\end{lemma}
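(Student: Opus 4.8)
The plan is to realize the compact subspace $K$, with its subspace topology, as a subspace of a Tychonoff cube of weight at most $\kappa$. The cardinality bookkeeping is done, as in the proof of Lemma~2.5, by Mi\v{s}\v{c}enko's lemma, and the actual embedding is manufactured from Urysohn functions subordinate to finite minimal subcovers, using that a compact subspace of a regular space is normal.

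First I would pass from $X$ to $K$. Let $\mathcal{V}$ be a family of open subsets of $X$ witnessing $psw_{e}(Z,X)\le\kappa$, and put $\mathcal{V}_{K}=\{V\cap K:V\in\mathcal{V},\ V\cap K\ne\emptyset\}$. Then $\mathcal{V}_{K}$ is an open cover of $K$ with $ord(x,\mathcal{V}_{K})\le\kappa$ and $\bigcap\{U\in\mathcal{V}_{K}:x\in U\}=\{x\}$ for every $x\in K$; i.e. it is a separating open cover of the compact Hausdorff space $K$ of point-order at most $\kappa$. By Mi\v{s}\v{c}enko's lemma the set $\mu$ of all finite minimal subcovers of $K$ by members of $\mathcal{V}_{K}$ has $|\mu|\le\kappa$, hence $\lambda:=\sum_{\mathcal{W}\in\mu}|\mathcal{W}|\le\kappa$.

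For each $\mathcal{W}\in\mu$ and each $W\in\mathcal{W}$ let $C_{W}^{\mathcal{W}}=K\setminus\bigcup(\mathcal{W}\setminus\{W\})$, a closed subset of $K$ contained in $W$. Since $K$ is normal, Urysohn's lemma provides a continuous $f_{W}^{\mathcal{W}}:K\to[0,1]$ that is $\equiv 1$ on $C_{W}^{\mathcal{W}}$ and $\equiv 0$ on $K\setminus W$. Let $\Phi:K\to[0,1]^{\lambda}$ be the map with these coordinate functions; it is continuous. To see that $\Phi$ is injective, take $x\ne y$ in $K$ and, by the separating property, pick $V\in\mathcal{V}_{K}$ with (say) $x\in V$ and $y\notin V$. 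For each $z$ in the compact set $K\setminus V$ we have $z\ne x$, so the separating property at $z$ yields $U_{z}\in\mathcal{V}_{K}$ with $z\in U_{z}$ and $x\notin U_{z}$; picking a finite subcover $U_{z_{1}},\dots,U_{z_{m}}$ of $K\setminus V$ and adjoining $V$ gives a finite subcover of $K$ in which $V$ is the unique member containing $x$. Minimalizing it produces some $\mathcal{W}\in\mu$; $V$ survives (it is the only member that can cover $x$) and remains the unique member of $\mathcal{W}$ containing $x$, so $x\in C_{V}^{\mathcal{W}}$ and therefore $f_{V}^{\mathcal{W}}(x)=1\ne 0=f_{V}^{\mathcal{W}}(y)$; thus $\Phi(x)\ne\Phi(y)$.

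Finally, $\Phi$ is a continuous injection of the compact space $K$ into a Hausdorff space, hence a homeomorphism onto its image, so $w(K)=w(\Phi(K))\le w([0,1]^{\lambda})=\lambda\le\kappa$. The main difficulty, in my view, lies in choosing this strategy rather than in any single computation: one cannot hope to extract directly a base of $K$ of cardinality $\le\kappa$ from $\mathcal{V}_{K}$ (the order-$\le\kappa$ hypothesis is in tension with $\mathcal{V}_{K}$ containing ``small'' open sets around points), so one must instead build the embedding; once that is decided, the only mildly delicate point is that the auxiliary finite subcover in the injectivity argument must be arranged so that $V$ is the sole set through $x$, so that minimalization cannot spoil this. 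Everything else—the use of Mi\v{s}\v{c}enko's lemma and the fact that a continuous bijection from a compact space onto a Hausdorff space is a homeomorphism—is routine.
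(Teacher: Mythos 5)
Your proof is correct, but it takes a genuinely different route from the paper. The paper disposes of the lemma in two lines: it observes the monotonicity $psw(K)\leq psw(Z)\leq psw_{e}(Z,X)$ (restrict a witnessing family to traces on $K$, resp.\ $Z$) and then simply cites the known theorem that $w(K)=psw(K)$ for compact spaces (\cite{Vaughan}, Ch.~1, Theorem~7.4). What you have done is, in effect, to reprove that cited theorem from scratch: starting from the trace family $\mathcal{V}_{K}$, you use Mi\v{s}\v{c}enko's lemma to bound the number of finite minimal subcovers, attach Urysohn functions to the ``kernel'' sets $C_{W}^{\mathcal{W}}=K\setminus\bigcup(\mathcal{W}\setminus\{W\})$ (legitimate, since a compact regular space is normal and the paper's standing hypotheses make $K$ compact Hausdorff), and embed $K$ into $[0,1]^{\lambda}$ with $\lambda\leq\kappa$; the injectivity argument, where you build a finite subcover in which the chosen $V$ is the unique member through $x$ so that minimalization cannot delete or crowd it, is exactly the delicate step and you handle it correctly. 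Your argument is self-contained and dovetails nicely with the way Lemma~2.4 of the paper already uses Mi\v{s}\v{c}enko's lemma, at the cost of length; the paper's argument is shorter but leans on an external reference. The only cosmetic remark is that your embedding actually shows $w(K)\leq psw(K)$ directly, so you could state it that way and recover the paper's formulation verbatim.
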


\begin{proof}
Let $K$ be a compact subset of $Z.$ Clearly, $psw(K)\leq psw(Z)\leq
psw_{e}(Z,X).$ The compactness of $K$ leads us to the fact that $w\left(
K\right) =psw(K).\left[ \text{in }\left[ 5\right] ,\text{ Ch. 1, Theorem 7.4}%
\right] .$ Hence the claim.
\end{proof}

Now, we are ready to give another bound for the pseudocharacter of the space 
$C_{\alpha }\left( X,Y\right) $ at a point $f.$

\begin{theorem}
For each $f\in C_{\alpha }\left( X,Y\right) ,$ we have%
\begin{equation*}
\psi \left( f,C_{\alpha }\left( X,Y\right) \right) \leq w\alpha c\left(
X\right) \cdot psw_{e}(f(X),Y).
\end{equation*}
\end{theorem}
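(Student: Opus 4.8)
The plan is to reduce Theorem~2.6 to Theorem~2.1 by showing that the hypothesis $psw_e(f(X),Y)\le\kappa$ controls both of the suprema appearing on the right-hand side of Theorem~2.1, where I abbreviate $\kappa = w\alpha c(X)\cdot psw_e(f(X),Y)$. The point is that for each $A\in\alpha$ the image $f(A)$ is a compact subset of $Y$, and moreover $f(A)\subseteq f(X)=Z$, so $f(A)$ is a compact subset of the subspace $Z$ with $psw_e(Z,Y)\le\kappa$. Thus the two auxiliary lemmas apply directly: Lemma~2.4 gives $\psi(f(A),Y)\le\kappa$, and Lemma~2.5 gives $w(f(A))\le\kappa$, for every $A\in\alpha$.

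First I would fix $f\in C_\alpha(X,Y)$ and set $\kappa = w\alpha c(X)\cdot psw_e(f(X),Y)$. Next, for an arbitrary $A\in\alpha$, I would note that $A$ is compact (since $\alpha$ is a compact network) and $f$ is continuous, so $f(A)$ is compact; it is also contained in $Z:=f(X)$, which by hypothesis satisfies $psw_e(Z,Y)\le psw_e(f(X),Y)\le\kappa$. Applying Lemma~2.4 with this $Z$ and $K=f(A)$ yields $\psi(f(A),Y)\le\kappa$, and applying Lemma~2.5 similarly yields $w(f(A))\le\kappa$. Taking the supremum over $A\in\alpha$ gives $\sup_{A\in\alpha}\psi(f(A),Y)\le\kappa$ and $\sup_{A\in\alpha}w(f(A))\le\kappa$.

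Finally I would invoke Theorem~2.1, which states
\begin{equation*}
\psi\left(f,C_\alpha(X,Y)\right)\le w\alpha c(X)\cdot \sup_{A\in\alpha}\psi(f(A),Y)\cdot \sup_{A\in\alpha}w(f(A)).
\end{equation*}
Substituting the bounds just obtained, the right-hand side is at most $w\alpha c(X)\cdot\kappa\cdot\kappa = w\alpha c(X)\cdot psw_e(f(X),Y)$, using that $\kappa$ is an infinite cardinal with $\kappa\cdot\kappa=\kappa$ and $w\alpha c(X)\le\kappa$. This gives $\psi\left(f,C_\alpha(X,Y)\right)\le w\alpha c(X)\cdot psw_e(f(X),Y)$, as required.

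There is essentially no obstacle here beyond bookkeeping with cardinal arithmetic: the real content has been front-loaded into Lemmas~2.4 and~2.5 (and behind those, Mi\v{s}\v{c}enko's lemma) and into Theorem~2.1. The only point to be slightly careful about is that the lemmas are phrased for a subspace $Z$ with $psw_e(Z,X)\le\kappa$ and a compact $K\subseteq Z$; one must observe that $psw_e$ is monotone in the sense that $psw_e(f(A),Y)$ need not itself be small, but $f(A)\subseteq f(X)$ means the \emph{same} separating family $\mathcal V$ witnessing $psw_e(f(X),Y)\le\kappa$ restricts to witness the hypotheses of the lemmas for $K=f(A)$. With that noted, the proof is a direct assembly of the earlier results.
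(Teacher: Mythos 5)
Your proposal is correct and follows essentially the same route as the paper: both reduce the statement to Theorem~2.1 by using Lemmas~2.4 and~2.5 (via the compactness of $f(A)\subseteq f(X)$ and the witnessing family for $psw_{e}(f(X),Y)$) to bound $\psi\left(f(A),Y\right)$ and $w\left(f(A)\right)$ by $\kappa$, then absorb everything by infinite cardinal arithmetic. If anything, your version is slightly cleaner, since you apply the lemmas to every $A\in\alpha$, which is literally what Theorem~2.1 requires, whereas the paper states the bounds only for the members of a chosen dense subfamily $\beta$.
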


\begin{proof}
Let $w\alpha c\left( X\right) \cdot psw_{e}(f(X),Y)=\kappa ,$ and let $\beta
=\left\{ A_{i}:i\in I\right\} $ be a subfamily of $\alpha $ such that $%
cl\left( \bigcup \beta \right) =X$ and $\left\vert I\right\vert \leq \kappa .
$ The compactness of $A_{i}$ for each $i\in I$ and the inequality $%
psw_{e}(f(X),Y)\leq \kappa $ lead us to the facts that $\psi \left(
f(A_{i}),Y\right) \leq \kappa $ and $w\left( A_{i}\right) \leq \kappa $ for
each $i\in I$, by lemmas.2.4 and 2.5 Therefore, by Theorem 2.1, we have $%
\psi \left( f,C_{\alpha }\left( X,Y\right) \right) \leq \kappa .$
\end{proof}

\bigskip


\begin{thebibliography}{1}
\bibitem[1]{Arkhangelskii} A. V. Arkhangel'skii, Function spaces in the
topology of pointwise convergence. Part I: \textit{General Topology:
function spaces and dimension}, \ Moskovsk. Gos. Univ., (1985) 3--66.\textit{%
\ }

\bibitem[2]{Arkhangelskii-2} A. V. Arkhangel'skii, \textit{Topological
Function Spaces}, \textit{Kluwer Academic Publishers,} Netherlands, 1992.

\bibitem[3]{Guthrie} J. A. Guthrie, `Ascoli theorems and the pseudocharacter
of mapping spaces'\textit{, Bull. Austral. Math. Soc., }10 (1974) 403--408.%
\textit{\ }

\bibitem[4]{Kunen} K. Kunen, \textit{Set Theory}, Elsevier, Amsterdam, 1980.

\bibitem[5]{Vaughan} K. Kunen and J. E. Vaughan(Eds.), \textit{Handbook of
Set Theoretic Topology, }Elsevier, Amsterdam, 1984.

\bibitem[6]{McCoy-Ntantu} R. A. McCoy and I. Ntantu, \textit{Topological
Properties of Spaces of Continuous Functions., }Lecture Notes in
Mathematics-1315, Springer-Verlag, Berlin, 1988.
\end{thebibliography}
\end{document}